\title[Long systolic pants decompositions]{Hyperbolic surfaces with long systoles\\ that form a pants decomposition}
\author{Bram Petri}
\address{Max Planck Institute for Mathematics, Bonn, Germany}
\email{brampetri@mpim-bonn.mpg.de}
\date{\today}
\newtheorem{thm}{Theorem}[section]
\newtheorem{prp}[thm]{Proposition}
\newtheorem{cor}[thm]{Corollary}
\newtheorem{lem}[thm]{Lemma}
\theoremstyle{definition}
\newcommand{\st}[2]{\left\{ #1 ;\; #2\right\}}
\newcommand{\card}[1]{\left| #1 \right|}
\newcommand{\sys}{\mathrm{sys}}
\newcommand{\diam}{\mathrm{diam}}
\newcommand{\dist}{\mathrm{d}}
\newcommand{\NN}{\mathbb{N}}
\begin{document}

\begin{abstract} We present a construction of sequences of closed hyperbolic surfaces that have long systoles which form pants decompositions of these surfaces. The length of the systoles of these surfaces grows logarithmically as a function of their genus.
\end{abstract}

\maketitle

\section{Introduction}

A systole of a closed hyperbolic surface is a shortest non-contractible curve on this surface. In this note, we will use the word systole both for the curve itself and for its length. 

The systole function achieves a maximum among all closed hyperbolic surfaces of a given genus. This follows from the compactness of the thick part of the moduli space of these surfaces. What this maximum should be is wide open in general. First of all, if a closed hyperbolic surface $X$ has genus $g(X)$, then a simple area argument shows that its systole $\sys(X)$ satisfies
\[\sys(X) \leq 2\log(4g(X)-2) \]
(see for instance \cite[Lemma 5.2.1]{Bus2}. Because of this inequality, a sequence of surfaces with \emph{long systoles} will be a sequence of compact hyperbolic surfaces $\{X_k\}_{k\in\NN}$ with genus $g(X_k)\to\infty$ as $k\to \infty$ and systoles $\sys(X_k)\geq C\log(g(X_k))$ for some constant $C>0$ independent of $k$.

It is known that surfaces with long systoles exist. In particular, in \cite{BS} Buser and Sarnak show that there are sequences $\{X_k\}_k$ of congruence covers of a fixed arithmetic surface such that
\[g(X_k)\to\infty\;\text{as }k\to\infty\;\;\text{and}\;\;\lim_{k\to\infty}\frac{\sys(X_k)}{\log(g(X_k))} \geq \frac{4}{3}.\]
This construction was later generalized to a larger class of surfaces by Katz, Schaps and Vishne in \cite{KSV}. \cite{PW} contains another construction of surfaces with long systoles, based on gluing hyperbolic triangles together. In the latter case the multiplicative constant in front of the logarithm equals $1$. In the case of cusped hyperbolic surfaces, there are similar bounds and constructions, for which we refer to \cite{Sch1}, \cite{Sch2}, \cite{Par2}, \cite{FP}.

The question we ask in this paper is a slight variation of the above problem. We not only ask for long systoles, but also ask that these systoles form a pants decomposition of the surface. Recall that a pants decomposition of a surface is a set of simple closed curves that decompose it into three-holed spheres (pairs of pants).

Let us first note that it isn't hard to construct a hyperbolic surface of which the systoles form a pants decomposition: take any pants decomposition and pinch all the curves down to some length $\varepsilon$. If $\varepsilon$ is small enough, the collar lemma guarantees that these curves are indeed systoles (see eg. \cite[Chapter 4]{Bus2} for details). This doesn't answer our question, because for this argument to work, $\varepsilon$ needs to be uniformly small.

There also already exists a construction of closed hyperbolic surfaces with growing systoles that form a pants decomposition. Namely, in \cite{Bus1}, Buser constructed surfaces with systolic pants decompositions consisting of curves of length roughly $\sqrt{\log(g)}$.

The main result (Theorem \ref{thm_pantssystoles}) of this article is that there exist sequences of closed hyperbolic surfaces $\{X_n\}_{n\in\NN}$ of which the systoles form a pants decomposition and
\[\sys(X_n)\geq \frac{4}{7}\log (g(X_n)) - K \;\;\text{and}\;\;g(X_n)\to\infty,\]
as $n\to\infty$, where $K$ is a constant independent of $n$.

The largest part of our proof, ending with Proposition \ref{prp_existence}, consists of finding a hyperbolic surface which for every $a\in (0,\infty)$ large enough:
\begin{itemize}[leftmargin=0.2in]
\item[1.] has a pants decomposition of curves that all have the same length $a$ and
\item[2.] all simple curves that are not part of the given pants decomposition are `long enough'.
\end{itemize}
The genus of this surface might much larger than what we want. However, if this is the case, we can modify it so that the genus achieves the bound we claim. This relies on a comparison between a type of diameter of the surface and its systole. A similar idea has been used by Erd\H{o}s and Sachs to construct regular graphs with large girth (the length of a shortest cycle in this graph) \cite{ES}.

\section*{Acknowledgment}

The author would like to thank Federica Fanoni, Ursula Hamenst\"adt, Nati Linial and Hugo Parlier for useful conversations. He is also grateful to the Max Planck Institute for Mathematics in Bonn for providing a pleasant work environment while this project was carried out.

\section{Background}

Recall that a closed hyperbolic surface is a compact $2$-manifold without boundary, equipped with a complete Riemannian metric of constant curvature $-1$. For the geometry of hyperbolic surfaces in general, we refer to \cite{Bus2}.

Given $a\in(0,\infty)$, let $P_a$ denote the hyperbolic pair of pants (three-holed sphere) of which all the boundary components have length $a$. Recall that this defines $P_a$ up to isometry.

To obtain surfaces with systolic pants decompositions and long systoles, we need to find a fortunate way to glue copies of $P_a$ together into a closed hyperbolic surface. Of course, if we want to have any chance of obtaining surfaces with large systoles, we need to choose $a$ large. Unfortunately, this will make the seams of $P_a$ (the shortest geodesic segments between all pairs of distinct boundary components of $P_a$) very short. Hence, the ultimate goal will be to balance these two effects.

Before we get to this (Section \ref{sec_construction}), we will first need to recall and prove some facts about the geometry of $P_a$ (Sections \ref{sec_pairofpants} and \ref{sec_arcs}) and spheres with boundary glued out of multiple copies of $P_a$ (Section \ref{sec_trees}).

\section{Pairs of pants}\label{sec_pairofpants}

Let us first gather some facts about the geometry of $P_a$. We will write $s(a)$ for the length of the three seams on $P_a$. Using the formulas on \cite[Page 454]{Bus2}, we obtain
\[\cosh(s(a)) = \frac{\cosh^2\left(\frac{a}{2} \right)+\cosh\left(\frac{a}{2}\right)}{\sinh^2\left(\frac{a}{2}\right)} = \frac{\cosh\left(\frac{a}{2}\right)}{\cosh\left(\frac{a}{2}\right) - 1}.\]
Working this out we obtain
\[s(a)\sim 2e^{-a/4}, \]
as $a\to\infty$, where we write $f(a)\sim g(a)$ as $a\to \infty$ if
\[\lim_{a\to\infty} \frac{f(a)}{g(a)}=1. \]

Every boundary component of $P_a$ has two pairs of special points: the points where the seams meet the boundary and the midpoints between each pair of seams, indicated in white and black respectively in the figure below:
\begin{figure}[H]
\begin{center}
\includegraphics[scale=1]{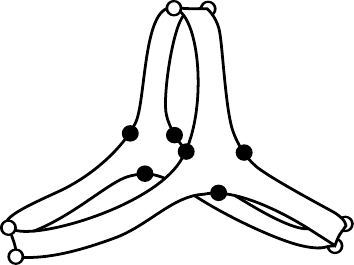} 
\end{center}
\caption{The pair of pants $P_a$.}
\label{pic_pants}
\end{figure}

All gluings of copies of $P_a$ into hyperbolic surfaces that we shall consider will be such that white points are glued to black points and vice versa. Note that this comes down to twists of either $a/4$ or $3a/4$. We will call such gluings \emph{admissible gluings}. 

For the arguments later on, we will need a bound on the diameter $\diam(P_a)$ of $P_a$ as a hyperbolic surface. To this end we have the following lemma: 
\begin{lem}\label{lem_diamP}
There exists a constant $C>0$, independent of $a$, so that
\[ \frac{a}{2}\leq \diam(P_a) \leq \frac{a}{2}+C,\]
for all $a$ large enough. 
\end{lem}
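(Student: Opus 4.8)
The plan is to prove the two inequalities separately; the lower bound is elementary, and essentially all the work lies in the upper bound.

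\emph{Lower bound.} Let $\gamma$ be one of the three boundary geodesics of $P_a$, so $\gamma$ has length $a$, and pick $p,q\in\gamma$ cutting $\gamma$ into two arcs $\alpha_1,\alpha_2$ of length $a/2$. If some path from $p$ to $q$ inside $P_a$ had length $<a/2$, then a shortest such path $\delta$ is a simple geodesic arc meeting $\gamma$ only in $p$ and $q$ (if it ran along $\gamma$ it would already have length $\geq a/2$). Then $\delta\cup\alpha_1$ and $\delta\cup\alpha_2$ are simple closed curves of length $<a$, and at least one of them is non‑contractible, since otherwise $\alpha_1\cup\alpha_2=\gamma$ would be contractible. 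But an essential simple closed curve in a pair of pants is isotopic to one of the three boundary curves, hence has length at least $a$, its geodesic representative being a length‑$a$ cuff. This contradiction gives $\dist_{P_a}(p,q)\geq a/2$, hence $\diam(P_a)\geq a/2$.

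\emph{Upper bound: set‑up.} I would cut $P_a$ along its three seams. Since all cuffs have length $a$, the three seams are exactly the fixed locus of the orientation‑reversing involution of $P_a$, and the cut produces two isometric copies $H_1,H_2$ of a right‑angled hexagon $H$ whose side lengths alternate $a/2,\,s(a),\,a/2,\,s(a),\,a/2,\,s(a)$ (cuff‑arc, seam, cuff‑arc, \dots). Because $s(a)\to 0$, the hexagon $H$ degenerates onto an ideal equilateral triangle; concretely $H$ is a thin ``tripod'', a central region of size $O(1)$ with three thin spikes of length $\approx a/4$ terminating in the short seam‑sides. The key idea is to encode this by an explicit spine. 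Let $c_\epsilon$ be the centre of $H_\epsilon$ (the fixed point of its order‑three symmetry) and let $m_1,m_2,m_3$ be the midpoints of the three seams, and put
\[ G \;=\; \bigcup_{\epsilon\in\{1,2\}}\ \bigcup_{k=1}^{3}\ [c_\epsilon,m_k]\ \subset\ P_a, \]
the union of the six geodesic segments joining the centres to the seam‑midpoints. Since two distinct geodesics out of a point meet only at that point, the $[c_\epsilon,m_k]$ intersect only along $\{c_1,c_2,m_1,m_2,m_3\}$, so $G$ is a copy of the complete bipartite graph $K_{2,3}$, with all edges of length $\dist(c_\epsilon,m_k)$.

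\emph{Upper bound: two estimates.} First, each edge $[c_\epsilon,m_k]$ has length $\leq a/4+O(1)$: since $H_\epsilon$ has area $\pi$, an embedded‑disc argument bounds its inradius by $\mathrm{arccosh}(3/2)$, and combined with the threefold symmetry this forces $\dist(c_\epsilon,m_k)\leq a/4+O(1)$ — the perpendicular from $c_\epsilon$ to a cuff‑side lands, by symmetry, at that side's midpoint, which is at distance $a/4$ from the seam‑endpoints. In $K_{2,3}$ any two points are joined by a path using at most two edges, so $\diam(G)\leq a/2+O(1)$. Second, every point of $P_a$ lies within an absolute constant of $G$: a point of $H_\epsilon$ is within $\mathrm{arccosh}(3/2)$ of $\partial H_\epsilon$ by the same embedded‑disc argument; each seam‑side is within $s(a)/2$ of some $m_k\in G$; and for a cuff‑side $e=[V,W]$, the $\delta$‑thinness of the geodesic triangle $c_\epsilon VW$ puts $e$ within $O(1)$ of $[c_\epsilon,V]\cup[c_\epsilon,W]$, which by convexity of the metric $\big(s(a)/2\big)$‑fellow‑travels $[c_\epsilon,m_i]\cup[c_\epsilon,m_j]\subset G$. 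Combining the two estimates,
\[ \diam(P_a)\ \leq\ \diam(G) + 2\,\sup_{x\in P_a}\dist(x,G)\ \leq\ \tfrac{a}{2}+O(1). \]

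\emph{The main obstacle.} It is the upper bound, and within it the delicate point is obtaining the constant $1/2$ rather than $1$ in front of $a$. Naive routes — travelling along a cuff to a seam, across the seam, and along another cuff, or passing through a single vertex of $H_\epsilon$ — only give $\diam(P_a)\lesssim a$. What rescues the factor $1/2$ is that, because $s(a)$ is exponentially small, each $H_\epsilon$ is a genuinely \emph{thin} tripod whose arms have length only $\approx a/4$: this is exactly what makes the $K_{2,3}$‑spine simultaneously coarsely dense and of diameter $\approx a/2$, through paths that use two short arms rather than a full detour around a cuff. Checking thinness of $H$ and the required fellow‑travelling constants is routine hyperbolic trigonometry, which I would defer to the formal proof.
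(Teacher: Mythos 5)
Your proof is correct in substance but takes a genuinely different route from the paper's, especially for the upper bound. The paper argues directly on the boundary: it first notes every interior point is within $O(1)$ of $\partial P_a$, reducing to bounding $\dist_{P_a}(p,q)$ for $p,q\in\partial P_a$; for $p,q$ on distinct cuffs it picks the two cuff-midpoints (the black points) on each of the two cuffs, labels them so that matched midpoints lie in the same hexagon (hence are a bounded distance $D$ apart, being in the fat core), and observes that the four along-the-boundary legs sum to $a$, so by pigeonhole one matched pair of legs sums to at most $a/2$, giving $\dist(p,q)\le a/2+D$. You instead build an explicit $K_{2,3}$ spine $G$ (hexagon centres to seam-midpoints), bound each edge by $a/4+O(1)$, and show $G$ is $O(1)$-dense. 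Both get $a/2+O(1)$; the paper's version is shorter and avoids the auxiliary checks your approach defers (thinness of $H_\epsilon$, the fellow-travelling constants, and the fact that the metric $K_{2,3}$ with equal edge lengths $L$ has diameter exactly $2L$ even for points interior to two disjoint edges — true, but it does need a short verification since $K_{2,3}$ contains pairs of edges with no common endpoint). Your lower bound argument is in the same spirit as the paper's terse remark that a pair of cuff-midpoints realises $a/2$, but you spell out the curve-shortening contradiction explicitly, which is a nice addition; just note that the phrase ``a shortest such path $\delta$ meets $\gamma$ only in $p$ and $q$'' uses convexity of $P_a$ (geodesic boundary) and the prior exclusion of $\delta\subset\gamma$, which is worth saying.
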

\begin{proof} The lower bound is for instance achieved by any pair midpoints on a single boundary component.

For the upper bound, we note that for $a$ large enough, the distance between any point on $P_a$ and $\partial P_a$ is uniformly bounded. As such we can instead bound $\max\st{\dist_{P_a}(p,q)}{p,q\in\partial P_a}$, where $\dist_X:X\times X\to [0,\infty)$ denotes the distance function on a hyperbolic surface $X$. 

Let $p,q\in\partial P_a$. Their distance is bounded from above by $a/2$ if they lie on the same boundary component, so let us assume that they do not. Denote by $m_{p,1}$, $m_{p,2}$ and $m_{q,1}$, $m_{q,2}$ the midpoints on the boundary components containing $p$ and $q$ respectively. Furthermore, we may label these midpoints so that when we cut $P_a$ open along the seams, $m_{p,1}$ and $m_{q,1}$ lie on the same right-angled hexagon and $m_{p,2}$ and $m_{q,2}$ do as well. As such 
\[\dist(m_{p,1},m_{q,1}) \leq D \;\;\text{and}\;\; \dist(m_{p,2},m_{q,2}) \leq D \]
for some $D>0$ that does not depend on $a$ (given that $a$ is large enough). The formulas on \cite[Page 454]{Bus2} can be used to work out an explicit bound for this constant.

We have
\[\dist_{P_a}(p,m_{p,1})+\dist_{P_a}(p,m_{p,2})+\dist_{P_a}(q,m_{q,1})+\dist_{P_a}(q,m_{q,2})=a. \]
So either
\[\dist_{P_a}(p,m_{p,1})+\dist_{P_a}(q,m_{q,1})\leq \frac{a}{2}\;\;\text{or}\;\;\dist_{P_a}(p,m_{p,2})+\dist_{P_a}(q,m_{q,2})\leq \frac{a}{2}.\]
Together with the bound on the distances between midpoints, this implies the lemma.
\end{proof}

For convenience, let us define a constant $a_0\in (0,\infty)$ so that 
\begin{equation}\label{eq_defa0}
s(a) \leq 3a/16 \;\;\text{and}\;\; 2\cdot \diam(P_a)/a \leq \min\{3/2-8s(a)/a,\;1+C/a\},
\end{equation}
for all $a\geq a_0$. The use of these inequalities will become apparent later on.

\section{Lengths of simple arcs}\label{sec_arcs}

In this section we record two lemmas about the lengths of arcs on $P_a$. The first, in which $\dist_X:X\times X\to [0,\infty)$ denotes the distance function on a hyperbolic surface $X$, is as follows:

\begin{lem}\label{lem_arcs} Let $\gamma \subset P_a$ be a simple arc between two points $p,q\in\partial P_a$ in the same boundary component and suppose that $\gamma$ is not homtopic to a segment in $\partial P_a$ relative to $p$ and $q$. Let $s_p$ and $s_q$ be the seams nearest to $p$ and $q$ respectively and write $x=\dist_{P_a}(p,s_p)$ and $y=\dist_{P_a}(q,s_q)$. Then
\[\ell(\gamma) \geq a - 4s(a)-x-y.\]
\end{lem}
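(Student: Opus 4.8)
The plan is to cut $P_a$ open along its three seams into two right-angled hexagons and analyze how $\gamma$ interacts with the seams. Since $\gamma$ joins two points $p,q$ on the same boundary component $\beta$ and is not homotopic into $\beta$, it cannot stay in a collar of $\beta$; in particular it must cross at least one of the seams, and a careful look shows it must in fact cross seams enough to ``go around'' the handle structure of the pants. The key combinatorial point is that $\gamma$ separates the two cuff-boundary components of $P_a$ other than $\beta$ (or wraps around one of them), and the three seams together with $\beta$ form a spine-like configuration that $\gamma$ must intersect in a controlled way. I would first reduce to a purely metric estimate on the cut-open hexagon(s) by lifting $\gamma$ to a path in $\mathbb{H}^2$ and keeping track of which seam-copies bound its image.

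The main step is the length estimate itself. Orient $\gamma$ from $p$ to $q$, and let $p'$ be the first point where $\gamma$ meets a seam and $q'$ the last such point. The initial segment of $\gamma$ from $p$ to $p'$ has length at least $\dist_{P_a}(p,s_p) = x$ if $s_p$ is the first seam hit, but to get the claimed bound I actually want to say the segment from $p$ to $p'$ has length at least (distance from $p$ to $\beta$-arc endpoints)$\,- \,$stuff; more precisely, on the boundary component $\beta$, cutting at the two seam-feet and the two midpoints divides $\beta$ into four arcs, and the total boundary length is $a$. The heart of the matter is a lower bound of the form $\ell(\gamma) \geq (\text{length of }\beta) - (\text{seam corrections}) - x - y$, which I would obtain by projecting $\gamma$ (or a homotopic representative keeping endpoints fixed) onto $\beta$ along the collar of $\beta$ and observing that this projection must traverse almost all of $\beta$: it must go from $p$, around past at least one midpoint on each ``side'', and back to $q$, missing only the short arcs of length $\leq 2s(a)$ near the two seam-feet adjacent to $p$ and $q$, plus the bits of length $x$ and $y$ that $\gamma$ ``saves'' by starting and ending near its nearest seams. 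Summing, the part of $\beta$ not covered has length at most $4s(a) + x + y$, giving $\ell(\gamma)\geq a - 4s(a) - x - y$.

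I would make this rigorous by working in the universal cover: lift $\beta$ to a geodesic $\tilde\beta \subset \mathbb{H}^2$, lift the collar of $\beta$, and lift $\gamma$ to an arc $\tilde\gamma$ with endpoints $\tilde p,\tilde q$ in the boundary of this collar. Because $\gamma$ is not homotopic into $\beta$, the endpoints $\tilde p$ and $\tilde q$ are separated along $\tilde\beta$ by at least one full translate corresponding to going around $\beta$ ``the long way'' relative to the seam pattern; the seam feet and midpoints on $\beta$ have a known spacing (two arcs of length $s(a)/2 + (\text{half-midpoint gaps})$ summing appropriately to $a$), and orthogonal projection to $\tilde\beta$ is distance-nonincreasing, so $\ell(\tilde\gamma) \geq \mathrm{d}_{\tilde\beta}(\pi(\tilde p),\pi(\tilde q))$, which by the combinatorics is at least $a - 4s(a) - x - y$.

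The main obstacle I expect is the bookkeeping of exactly which portions of $\beta$ are ``unavoidable'' for $\gamma$ to cover versus which can be skipped — i.e.\ pinning down the constant $4s(a)$ and the roles of $x$ and $y$ correctly, including checking the edge cases where $p$ or $q$ coincides with a seam foot or a midpoint, and the case where $\gamma$'s nearest seam on the $p$-side and on the $q$-side are the same seam. Handling the topology (why $\gamma$ must wrap all the way around rather than, say, crossing one seam and coming straight back) also needs care: this is where non-homotopy-into-$\beta$ is used, and I would spell it out via the intersection pattern of $\gamma$ with the seams in the cut-open hexagons.
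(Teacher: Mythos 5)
Your proposed projection argument has a real gap, and the universal-cover picture you use to support it is not correct. You claim that because $\gamma$ is not homotopic into $\beta$, the lifted endpoints $\tilde p$ and $\tilde q$ are separated along $\tilde\beta$ ``by at least one full translate corresponding to going around $\beta$.'' But the hypothesis is precisely the opposite situation: since $\gamma$ is \emph{not} homotopic into $\beta$, the lift $\tilde q$ does not lie on $\tilde\beta$ at all (nor on $g^n\tilde\beta=\tilde\beta$ for $g$ the deck transformation of $\beta$); it lies on a different translate $h\tilde\beta$ with $h\notin\langle g\rangle$. Such a $\tilde q$ can be far from $\tilde\beta$, and nearest-point projection in $\mathbb H^2$ onto a geodesic contracts drastically away from that geodesic, so $\pi(\tilde q)$ can land arbitrarily close to $\tilde p$. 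The inequality $\ell(\gamma)\geq \mathrm d_{\tilde\beta}(\pi(\tilde p),\pi(\tilde q))$ is true but gives no information in exactly the cases you need, and the claim that the projection ``must traverse almost all of $\beta$, missing only arcs of total length $\leq 4s(a)+x+y$'' is unsubstantiated---there is no reason the projection should cover most of $\beta$ once $\gamma$ leaves the collar. The correction term $4s(a)$ never actually emerges from the projection; you only assert it should ``by the combinatorics.''

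The paper's proof supplies exactly the ingredient your sketch is missing. It cuts $\gamma$ at its crossings with the three seams, and for each sub-arc builds a \emph{closed} curve homotopic to one of the pants boundary curves (hence of length $\geq a$) by appending segments of seams and of $\partial P_a$; the terms $4s(a)$, $x$, $y$ are literally the lengths of those appended segments. This requires a case split on the number of seam crossings (one, two, or $\geq 3$) and on whether $s_p=s_q$, since the curve one completes to is different in each case. You gesture at the right objects at the end---the cut-open hexagons and the intersection pattern with the seams---but identify the projection as the ``heart of the matter,'' when in fact the homotopy-to-a-pants-curve completions are what carry the estimate. As written, the argument does not close.
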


\begin{proof} Because $\gamma$ is not homotopic to a segment in $\partial P_a$, it necessarily intersects one of the seams. We will prove our lemma in three steps: first for arcs that intersect the seams on $P_a$ only once, then for arcs that intersect the seams twice and finally for arcs that intersect the seams at least three times.

Consider the following figure for the first case:
\begin{figure}[H]
\begin{center}
\begin{overpic}[scale=1]{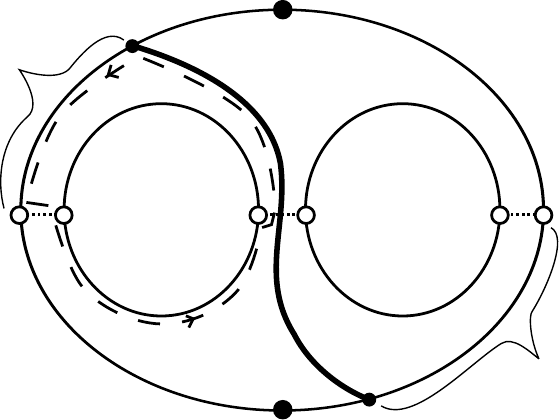} 
\put (23,71) {$p$}
\put (64,7) {$q$}
\put (-3,63) {$x$}
\put (97,7) {$y$}
\put (37,63) {$\gamma_{1}$}
\put (52,17) {$\gamma_{2}$}
\put (34,12) {$\beta$}
\end{overpic}
\end{center}
\caption{An arc on $P_a$ that intersects the seams once.}
\label{pic_pantsarc1}
\end{figure}

It shows the arc $\gamma$ that is cut into two sub-arcs $\gamma_{1}$ and $\gamma_{2}$ by one of the seams. Note that $\gamma$ necessarily intersects the `middle' seam, if not it would be homotopic to the boundary.

The dashed curve $\beta$ in the image is homotopic to one of the boundary curves of $P_a$. As such
\[ \ell(\gamma_{1})+2s(a)+x + \frac{a}{2} \geq \ell(\beta)\geq a \; \Rightarrow \; \ell(\gamma_{1}) \geq \frac{a}{2}-x-2s(a).\]
Likewise, we have
\[\ell(\gamma_{2}) \geq \frac{a}{2}-y-2s(a). \]
Adding these up gives us the inequality we are after. Because the case where the nearest seam to $p$ and $q$ is the same is completely analogous, this completes the proof in the case where $\gamma$ intersects the seems once.

The case where $\gamma$ intersects the seams twice splits into two situations. First note that $\gamma$ necessarily intersects two distinct seams, otherwise it would be homotopic to a boundary segment. 

We start with assumption that the seams closest to $p$ and $q$ are the same. Suppose that of the two, $q$ is closest to the seam, in this case, the situation looks as follows:
\begin{figure}[H]
\begin{center}
\begin{overpic}[scale=1]{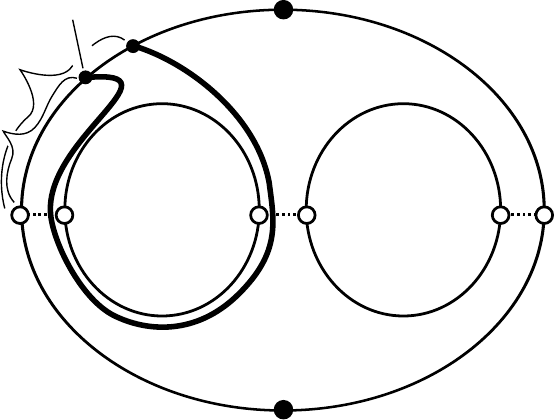} 
\put (23,71) {$p$}
\put (11,73.5) {$q$}
\put (-1,63) {$x$}
\put (-3,51) {$y$}
\put (34,13) {$\gamma$}
\end{overpic}
\end{center}
\caption{An arc on $P_a$ that intersects the seams twice.}
\label{pic_pantsarc2}
\end{figure}

We can complete $\gamma$ with a boundary segment into a curve homotopic to a boundary curve. As such we obtain
\[\ell(\gamma) \geq a - x + y \geq a - 4s(a) -x - y.\]
If $p$ is closest to the seam instead then the plus and minus signs in front of $x$ and $y$ interchange, which means that the lemma is still valid.

The second possibility is that the seam closest to $p$ is not the seam closest to $q$. In that case, the situation looks as in the following figure:
\begin{figure}[H]
\begin{center}
\begin{overpic}[scale=1]{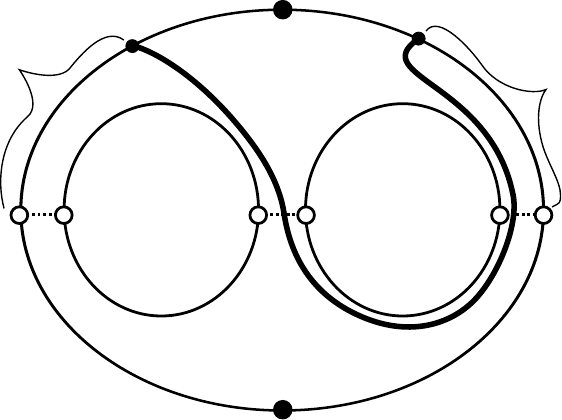} 
\put (23,71) {$p$}
\put (74,72) {$q$}
\put (-3,63) {$x$}
\put (98,60) {$y$}
\put (36,60) {$\gamma_{1}$}
\put (64,13) {$\gamma_{2}$}
\end{overpic}
\end{center}
\caption{Another arc on $P_a$ that intersects the seams twice.}
\label{pic_pantsarc3}
\end{figure}
With similar arguments to the one above, we obtain
\[\ell(\gamma_{1}) \geq \frac{a}{2}-x-2s(a)\;\;\text{and}\;\;\ell(\gamma_{2})\geq \frac{a}{2}-2s(a), \]
hence
\[\ell(\gamma) \geq a - x -4s(a) \geq a - x - y -4s(a). \]
Again, we might change which pair of seams $\gamma$ intersects, which changes the role of $x$ and $y$ in the above, but not the validity of the lemma.

Finally, in the case where $\gamma$ intersects the seams at least three times, $\gamma$ has at least two sub-arcs that run from seam to seam. By the same arguments as the ones above, such an arc has length at least $\frac{a}{2}-2s(a)$, from which we obtain that
\[\ell(\gamma) \geq a - 4s(a), \]
which finishes the proof.
\end{proof}

Furthermore, we have the following:
\begin{lem}\label{lem_shortcuts} Let $\alpha\subset P_a$ be a simple arc between two points $p$ and $q$ on a single boundary component of $P_a$. Furthermore, let $\beta_1$ and $\beta_2$ be the two arcs between $p$ and $q$ on the given boundary component. Then
\[\ell(\alpha) \geq \max\{\ell(\beta_1),\ell(\beta_2)\}. \]
\end{lem}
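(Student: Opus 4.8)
The plan is to argue by contradiction: suppose $\ell(\alpha) < \max\{\ell(\beta_1), \ell(\beta_2)\}$, and without loss of generality that $\ell(\beta_1) \geq \ell(\beta_2)$, so $\ell(\alpha) < \ell(\beta_1)$. The arc $\alpha$ and the boundary arc $\beta_2$ together bound a subsurface of $P_a$; I want to show this leads to a curve that is too short, or directly to a length comparison that is violated. The cleanest route is probably to consider the closed curve $\gamma = \alpha \cup \beta_2$ (going out along $\alpha$ and back along $\beta_2$), which is a piecewise-geodesic loop based at $p$ (equivalently, a loop meeting the boundary only at $p$ and $q$). Since $\beta_1 \cup \beta_2$ is the full boundary component, this curve $\gamma$ is freely homotopic either to a point, to the boundary component itself, or to one of the other two boundary components of $P_a$ — these are the only free homotopy classes of simple closed curves on a pair of pants, together with the three boundary curves.

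I would then rule out each possibility. If $\gamma = \alpha \cup \beta_2$ is contractible, then $\alpha$ is homotopic rel endpoints to $\beta_2$, and since $\alpha$ is a geodesic-type competitor (or at least a curve we may take to be length-minimizing in its class) while $\beta_2$ is a boundary segment, we get $\ell(\alpha) \geq \ell(\beta_2)$ — but this does not yet contradict $\ell(\alpha) < \ell(\beta_1)$, so I also need the complementary curve: $\alpha \cup \beta_1$ is then homotopic to the whole boundary component, forcing $\ell(\alpha) + \ell(\beta_1) \geq \ell(\partial) = \ell(\beta_1) + \ell(\beta_2)$, i.e. $\ell(\alpha) \geq \ell(\beta_2)$ again, still not enough. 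The real gain comes when $\gamma$ is homotopic to one of the \emph{other} boundary components (length $a$): then $\ell(\alpha) + \ell(\beta_2) \geq a$, and simultaneously the other decomposition $\alpha \cup \beta_1$ is homotopic to the remaining boundary component, giving $\ell(\alpha) + \ell(\beta_1) \geq a$; combined with $\ell(\beta_1) + \ell(\beta_2) = a$ (since they make up one boundary component of length $a$) these force $\ell(\alpha) \geq \ell(\beta_1)$ and $\ell(\alpha) \geq \ell(\beta_2)$, the desired conclusion. The one subtlety is that $\alpha$ separates the two other boundary components, so exactly one of $\alpha \cup \beta_1$, $\alpha \cup \beta_2$ surrounds each of them — this is the combinatorial input that makes both inequalities available simultaneously.

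The main obstacle is handling the degenerate homotopy classes carefully: the case where $\gamma$ is null-homotopic or homotopic to the boundary component on which $p, q$ lie. In those cases $\alpha$ is homotopic rel endpoints into the boundary, and I must argue that even then $\ell(\alpha) \geq \ell(\beta_i)$ for the appropriate $i$ — this is where one uses that a geodesic arc homotopic into a geodesic boundary circle, while staying embedded, cannot be shorter than the boundary segment it is homotopic to (it would have to ``wrap'' otherwise, violating simplicity, or one invokes convexity of the collar). I would also need to note that it suffices to prove the bound for a length-minimizing representative of the homotopy class of $\alpha$ rel endpoints, so I may assume $\alpha$ is as short as possible; if the minimizer degenerates into the boundary the inequality is immediate. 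Once these edge cases are dispatched, the generic case is the straightforward Euclidean-style triangle inequality on the pair of pants described above.
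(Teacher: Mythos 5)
Your generic-case argument is exactly the paper's proof: for an essential arc $\alpha$, the two loops $\alpha\cup\beta_1$ and $\alpha\cup\beta_2$ are freely homotopic to the two \emph{other} boundary components of $P_a$ (each of length $a$), which gives $\ell(\alpha)+\ell(\beta_i)\geq a$ for $i=1,2$ and hence $\ell(\alpha)\geq \max\{\ell(\beta_1),\ell(\beta_2)\}$. The paper does this directly in two lines, without the contradiction wrapper.

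Your worry about the degenerate case is legitimate and in fact exposes an unstated hypothesis. If $\alpha$ is homotopic rel endpoints to $\beta_1$, then $\alpha\cup\beta_1$ is contractible and $\alpha\cup\beta_2$ is homotopic to the boundary component containing $p,q$, so the argument yields only $\ell(\alpha)\geq\ell(\beta_1)$ --- and the stated inequality genuinely fails there (take $\alpha$ a slight push-off of $\beta_1$ into the interior when $\ell(\beta_1)<\ell(\beta_2)$). You correctly note you cannot close this case; neither does the paper, which simply asserts that $\alpha\cup\beta_1$ is homotopic to ``one of the pants curves'', tacitly assuming $\alpha$ essential. This is harmless for the intended use in Corollary~\ref{cor_nonpantscurves}, where $\alpha$ is a sub-arc of a simple closed geodesic and so cannot be homotopic rel endpoints into the geodesic boundary. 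In short: same approach, and the ``obstacle'' you flagged is a real (if ultimately moot) imprecision in the lemma's statement rather than a defect of your reasoning.
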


\begin{proof} 
Set $b=\ell(\beta_1)$. Hence $\ell(\beta_2)=a-b$. Because $\alpha\cup\beta_1$ is homotopic to one of the pants curves, we have
\[\ell(\alpha) + b \geq a \; \Rightarrow \; \ell(\alpha) \geq a-b = \ell(\beta_2).\]
Likewise, $\alpha\cup\beta_2$ is homotopic to the other pants curve, so
\[\ell(\alpha)+a-b \geq a \; \Rightarrow \; \ell(\alpha) \geq b=\ell(\beta_1).\]
\end{proof}

\section{Trees of pants}\label{sec_trees}

The two lemmas above allow us to control the lengths of simple curves on spheres with boundary obtained from admissible gluings of multiple copies of $P_a$. In this section we detail how to do this.

Given $k\in\NN$, let $B_k$ be the radius $k$ ball around a vertex in the infinite $3$-regular tree. So $B_k$ is a tree in which all the vertices except the leaves have degree $3$. Furthermore, let $X_k$ be the surface obtained by gluing copies of $P_a$ together in an admissible way according to $B_k$. Note that this uniquely defines the surface $X_k$, this follows from the symmetry of $B_k$. Topologically $X_k$ is a sphere with $3\cdot 2^k$ boundary components.

We have the following corollary to the lemmas above:
\begin{cor}\label{cor_nonpantscurves} Let $k\in \NN$ and $a\geq a_0$. Furthermore, let $\gamma$ be a simple closed curve on $X_k$ that is not homotopic to a pants curve. Then
\[\ell(\gamma) \geq 3a/2 - 8s(a). \]
\end{cor}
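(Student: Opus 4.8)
The plan is to reduce the statement to the pair-of-pants lemmas of Section~\ref{sec_arcs} by cutting $\gamma$ into arcs along the pants curves of $X_k$. Let $\gamma$ be a simple closed curve not homotopic to a pants curve, and isotope it so that it is in minimal position with respect to the pants decomposition coming from the gluing pattern $B_k$. If $\gamma$ is disjoint from all pants curves, it lies inside a single copy of $P_a$, hence is either contractible or homotopic to a boundary component of that copy; a boundary component of an interior copy is a pants curve, and a boundary component that is a boundary of $X_k$ is contractible in $X_k$ unless \dots\ in any case $\gamma$ is not in minimal position or is homotopic to a pants curve, a contradiction. So $\gamma$ crosses at least one pants curve, and minimal position forces it to cross that curve at least twice, cutting $\gamma$ into a cyclic concatenation of $N\geq 2$ simple arcs $\gamma_1,\dots,\gamma_N$, each contained in a single copy of $P_a$ and each running between points on $\partial P_a$.

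Next I would bound the length of each arc from below. An arc $\gamma_i$ inside a copy of $P_a$ either has endpoints on two different boundary components, or has endpoints on the same boundary component. In the first case, $\gamma_i$ is a seam-to-boundary type arc and its length is at least $\diam(P_a)$ minus bounded error --- more usefully, one can argue directly as in Lemma~\ref{lem_arcs} that such an arc has length at least $a/2 - 2s(a)$, since it must cross a seam and each half runs from the boundary to a seam. In the second case I apply Lemma~\ref{lem_arcs} and Lemma~\ref{lem_shortcuts}: either $\gamma_i$ is homotopic rel endpoints into $\partial P_a$, or $\ell(\gamma_i)\geq a - 4s(a) - x_i - y_i$ where $x_i,y_i$ are the distances from the endpoints to the nearest seams. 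The key point of the admissible gluing is that white points (seam feet) are glued to black points (midpoints): whenever $\gamma$ crosses a pants curve, the ``distance to nearest seam'' contribution $x_i$ from one side plus the corresponding contribution from the adjacent arc $\gamma_{i+1}$ on the other side add up to something comparable to $a/4$ up to the seam length, not to $0$. So the $x_i+y_i$ correction terms, summed around $\gamma$, are controlled rather than destroying the bound.

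The main obstacle --- and where the constant $3a/2 - 8s(a)$ comes from --- is the bookkeeping of these endpoint-correction terms together with the possibility that some of the arcs $\gamma_i$ \emph{are} homotopic rel endpoints into the boundary. An arc homotopic into $\partial P_a$ contributes a genuine boundary-segment length by Lemma~\ref{lem_shortcuts}, and one must check that such arcs cannot all be short simultaneously: since $\gamma$ is not homotopic to a pants curve, not every arc can be boundary-parallel ``in the trivial way,'' and the cyclic word of arcs must at some point realize an essential crossing of the middle seam in some copy, forcing one arc of length $\geq a - 4s(a)$ together with enough additional length from the neighbouring arc(s). Concretely I expect the worst case to be exactly two arcs, one of length $\geq a - 4s(a)$ and one of length $\geq a/2 - 4s(a)$ (a seam-to-seam or boundary-to-seam arc after absorbing the correction terms via the admissibility of the gluing), summing to $3a/2 - 8s(a)$; all other configurations will be strictly longer because they involve either more arcs or arcs crossing more seams. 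The definition of $a_0$ in~\eqref{eq_defa0}, in particular $s(a)\leq 3a/16$, is presumably what guarantees the right-hand side $3a/2-8s(a)$ is positive and that the bound $2\diam(P_a)/a \leq 3/2 - 8s(a)/a$ is the clean form in which this will be used in Section~\ref{sec_construction}.
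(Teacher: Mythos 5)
There is a genuine gap, and also a concrete error, in your proposal.

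The concrete error is your estimate for arcs running between two \emph{different} boundary components of a copy of $P_a$: you claim such an arc has length at least $a/2 - 2s(a)$ because ``it must cross a seam and each half runs from the boundary to a seam.'' This is false. An arc joining two distinct boundary components of $P_a$ need not cross any seam at all --- the shortest such arc \emph{is} a seam, of length $s(a)$, which for $a \geq a_0$ is exponentially small. This is exactly why a naive ``sum the arc lengths'' argument over all pairs of pants crossed by $\gamma$ cannot work: when $\gamma$ visits three or more pairs of pants, it contains arcs of this type, and their lengths contribute essentially nothing. Your worst-case guess (two arcs, one of length $\geq a - 4s(a)$, one of length $\geq a/2 - 4s(a)$) is built on this false bound and so does not survive.

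The missing idea is a reduction step. The paper first assumes (as you may, by passing to the geodesic representative) that $\gamma$ is a simple closed geodesic, which immediately disposes of your worry about boundary-parallel arcs: geodesics do not form bigons with the pants curves, so \emph{no} sub-arc $\gamma_i$ is homotopic rel endpoints into the boundary, and Lemma~\ref{lem_arcs} applies to every one of them. Then, rather than bounding a general $\gamma$ directly, one shows that if $\gamma$ crosses $n \geq 3$ pairs of pants, it can be replaced by a strictly shorter simple closed curve that crosses only $n-1$ pairs of pants and is still not a pants curve: take a leaf $P$ of the subtree $T_\gamma \subset B_k$ of pants met by $\gamma$, use the fact that every pants curve in $X_k$ is separating to identify an outermost arc of $\gamma$ in $P$ cutting off a boundary curve $\alpha_1$ with spare topology on its far side, and replace that arc by the appropriate boundary segment; Lemma~\ref{lem_shortcuts} guarantees this shortens the curve, and the choice of side keeps it essential and non-pants. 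Iterating reduces to the case of exactly two pairs of pants, where all the $\gamma_i$ do have both endpoints on one boundary curve, Lemma~\ref{lem_arcs} gives $\ell(\gamma_i) \geq a - 4s(a) - x_i - y_i$ for each, and the admissibility relation $x_{i+1} = a/4 - y_i$ makes the correction terms telescope to give exactly $m(3a/2 - 8s(a))$ with $m \geq 1$. Your proposal gestures at the admissibility cancellation but lacks the reduction machinery that makes the bookkeeping legitimate, and without it the argument cannot be completed.
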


\begin{proof} We may and will assume that $\gamma$ is a simple closed geodesic. Any such curve $\gamma$ intersects the pants curves in a positive even number of points. In particular, $\gamma$ crosses at least two pairs of pants.

Let us denote the points where $\gamma$ intersects the pants curves $p_1,\ldots,p_{2m}$. Furthermore, let $\gamma_i$ be the sub-arc of $\gamma$ between $p_i$ and $p_{i+1}$ for $i=1,\ldots,2m$, where we set $p_{2m+1}=p_1$. Note that because geodesics do not form bigons, none of these $\gamma_i$ are homotopic to a segment of a pants curve relative to $p_i$ and $p_{i+1}$. Let $P_i$ be the pair of pants on which $\gamma_i$ lies and let $s_{i,1}$ and $s_{i,2}$ denote the seams of $P_i$ that are nearest to $p_i$ and $p_{i+1}$ respectively and write
\[x_i =  \dist_{P_i}(p_i,s_{i,1}) \;\;\text{and}\;\; y_i =  \dist_{P_i}(p_{i+1},s_{i,2})  \]
Because the gluing is admissible, we have
\begin{equation}\label{eq_admiss}
x_{i+1} = \frac{a}{4}-y_i.
\end{equation}
Furthermore, because every pants curve on $X_k$ is separating, at least two of the segments $\gamma_i$ run between two points on a single pants curve.

Let us first assume that $\gamma$ lies on two pairs of pants only. In particular, this means that all the points $p_i$ lie on the same pants curve. Lemma \ref{lem_arcs} tells us that
\[
\ell(\gamma) = \sum_{i=1}^{2m} \ell(\gamma_i) \geq \sum_{i=1}^{2m} a-x_i-y_i - 4s(a) = 2ma - 8ms(a) - \left(\sum_{i=1}^{2m}x_i+y_i\right).\]
Now we apply \eqref{eq_admiss} and obtain
\[\ell(\gamma) \geq m\cdot \left(\frac{3}{2}a - 8s(a)\right). \]
Because $a\geq a_0$, the right hand side is minimized when $m=1$ (see \eqref{eq_defa0}).

Our goal will be to use Lemma \ref{lem_shortcuts} to show that if $\gamma$ runs through $n\geq 3$ pairs of pants, then we can modify it, so that it becomes a curve that is shorter, lies in $n-1$ pairs of pants and is still not a pants curve. Note that this is enough to prove the corollary.

The pairs of pants that contain $\gamma$ induce a sub-tree $T_\gamma\subset B_k$ and a subsurface $X_\gamma\subset X_k$. Consider a pair of pants $P$ that corresponds to a leaf of $T_\gamma$. Denote the pants curves of $P$ that do not intersect $\gamma$ by $\alpha_1$ and $\alpha_2$. When restricted to $P$, $\gamma$ necessarily consists of a disjoint union of arcs running between pairs of points on a single boundary component of $P$. Because each of these arcs is separating on $P$, we can speak of the arcs closest to $\alpha_1$ and $\alpha_2$ respectively. These are the arcs $\gamma_1$ and $\gamma_2$ such that one component of $P\setminus\gamma_i$ contains $\alpha_i$ and no other arc of $\gamma$. 

Because $T_\gamma$ consists of more than $3$ pairs of pants, it has at least $5$ boundary components. Furthermore, since $\gamma$ is simple and separating, $T_\gamma\setminus\gamma$ consists of two components. Hence, there is a component of $T_\gamma\setminus\gamma$ that has at least $3$ boundary components (excluding the boundary component coming from $\gamma$). Exactly one of these is $\alpha_1$ or $\alpha_2$. Without loss of generality, suppose it is $\alpha_1$. 

We may now replace $\gamma_1$ by one of the two segments between the two points where it meets $\partial P$. The segment we choose is the segment on the side of $\alpha_1$. By construction, the modified curve is still simple and still wraps around at least two boundary components of $X_k$ and hence is still not homotopic to a pants curve. The segment we added can however be homotoped into $X_\gamma\setminus P$. By Lemma \ref{lem_shortcuts} the resulting curve is also shorter than $\gamma$. 

As such we can remove the arcs of $\gamma$ in $P$ one by one until the resulting curve can be homotoped to lie in $X_\gamma\setminus P$ and is shorter than $\gamma$.
\end{proof}

\section{The construction}\label{sec_construction}

We are now ready to describe the construction. First of all, given $a>0$, a trivalent graph $\Gamma$ with vertices $V$ and edges $E$ and $t\in \{-1,1\}^E$, we define a hyperbolic surface $S_a(\Gamma,t)$. $S_a(\Gamma,t)$ is the surface obtained by gluing copies of $P_a$ according to $\Gamma$ with twist $\tau_e$ along the edge $e$ for all $e\in E$, where
\[\tau_e = \left\{ \begin{array}{ll} a/4 & \text{if }t_e=-1 \\ 3a/4 & \text{if }t_e=1 \end{array}\right. \text{ for all } e\in E.\]
Here we measure twist as the distance between the images of some predefined triple of midpoints (the black points in Figure \ref{pic_pants}) on $\partial P_a$ (one on each boundary component). Which exact triple we choose, will not play a role in what follows.

Given such a pair $(\Gamma,t)$, let us write
\[ g(\Gamma,t)\;\;\text{and}\;\;\sys_a(\Gamma,t) \]
for the genus and the systole of $S_a(\Gamma,t)$ respectively. We furthermore define
\[\diam_a(\Gamma,t) = \max_{v,w\in V}\left\{\dist_{S_a(\Gamma,t)} (\partial v,\partial w) \right\}. \]
where $\partial v\subset S_a(\Gamma,t)$ denotes the boundary of the copy of $P_a$ corresponding to a vertex $v\in V$.

Our first observation is the following:
\begin{lem}\label{lem_diama} Let $(\Gamma,t)$ be as above and $a\geq a_0$. Then there exists a constant $R>0$ independent of $(\Gamma,t)$ and $a$ such that
\[\frac{3a}{4}+\diam_a(\Gamma,t) \geq \log(\card{V}) - R \]
\end{lem}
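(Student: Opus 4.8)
The plan is to estimate the number of vertices of $\Gamma$ by a volume/counting argument, using $\diam_a(\Gamma,t)$ to bound how far any vertex is from a fixed one, and the valence $3$ of $\Gamma$ to bound the number of vertices within a given combinatorial distance. First I would fix a vertex $v_0\in V$. For any other vertex $w$, the surface $S_a(\Gamma,t)$ contains a path from $\partial v_0$ to $\partial w$ of length at most $\diam_a(\Gamma,t)$. I want to turn such a geometric path into a combinatorial path in $\Gamma$ and control the number of edges it crosses. The key point is that every time the path passes from one copy of $P_a$ to an adjacent one, it must cross a pants curve, and between two consecutive crossings the path runs through a single copy of $P_a$; by Lemma \ref{lem_diamP} such a sub-arc (if it really goes from one boundary component of that copy to another) has length at least $a/2-\text{(something bounded)}$, in fact at least roughly $a/2$ up to an additive constant, once one accounts for the short seams. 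Actually, cleaner: an arc inside $P_a$ joining two distinct boundary components has length at least $a/2 - C'$ for a uniform $C'$, which one reads off the same hexagon formulas used in Lemma \ref{lem_diamP}; I would either cite that lemma's proof or record it as the needed estimate.

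Next I would make this precise as follows. Let $w\in V$ realize (or nearly realize) the maximum in the definition of $\diam_a(\Gamma,t)$ together with $v_0$, and let $\delta$ be the combinatorial distance in $\Gamma$ from $v_0$ to $w$, i.e.\ the length of the shortest edge-path. A geodesic-ish path $\eta$ in $S_a(\Gamma,t)$ from $\partial v_0$ to $\partial w$ of length $\le \diam_a(\Gamma,t)$ crosses a sequence of pants curves, and the sequence of copies of $P_a$ it visits traces out a walk in $\Gamma$ from $v_0$ to $w$; consecutive distinct vertices on this walk are adjacent in $\Gamma$, so the walk has length at least $\delta$. Each maximal sub-arc of $\eta$ lying in one visited copy of $P_a$ and joining two \emph{distinct} boundary components contributes at least $a/2 - C'$ to the length, and there are at least $\delta - O(1)$ such sub-arcs (the first and last sub-arc may start or end on the same boundary component, so I allow a bounded correction). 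Hence
\[
\diam_a(\Gamma,t) \;\ge\; (\delta - O(1))\Bigl(\tfrac{a}{2} - C'\Bigr) \;\ge\; \tfrac{a}{2}\,\delta - C''\delta \cdot \tfrac{1}{a}\cdot a \ldots
\]
— more carefully, since $a\ge a_0$ is bounded below, $\tfrac a2 - C' \ge \tfrac a4$ for $a_0$ large enough, so $\diam_a(\Gamma,t) \ge \tfrac{a}{4}(\delta - O(1))$, giving $\delta \le \tfrac{4}{a}\diam_a(\Gamma,t) + O(1) \le \tfrac{4}{3a}\bigl(\tfrac{3a}{4}+\diam_a(\Gamma,t)\bigr) + O(1)$; but I actually only need the crude bound $\delta \le \tfrac{4}{a}\,\diam_a(\Gamma,t) + c_0$ for a universal $c_0$, together with $a \ge a_0$.

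Finally I would invoke the trivalence of $\Gamma$: the number of vertices at combinatorial distance $\le \delta$ from $v_0$ is at most $1 + 3 + 3\cdot 2 + \cdots + 3\cdot 2^{\delta-1} = 3\cdot 2^{\delta} - 2 \le 3\cdot 2^{\delta}$. Since every vertex lies within distance $\delta_{\max} := \max_w d_\Gamma(v_0,w)$ of $v_0$, and $\delta_{\max} \le \tfrac{4}{a}\,\diam_a(\Gamma,t) + c_0$, we get $\card V \le 3\cdot 2^{\,(4/a)\diam_a(\Gamma,t)+c_0}$, hence
\[
\log \card V \;\le\; \tfrac{4}{a}\,\diam_a(\Gamma,t)\,\log 2 + c_0\log 2 + \log 3 .
\]
Since $\log 2 < 1$ and, for $a \ge a_0$, $\tfrac4a \le 1$, the leading term is at most $\diam_a(\Gamma,t)$, so $\log\card V \le \diam_a(\Gamma,t) + R_0 \le \tfrac{3a}{4} + \diam_a(\Gamma,t) + R_0$ with $R_0 = c_0\log 2 + \log 3$ universal, which is the claim with $R = R_0$. (If one is not willing to use $\tfrac4a\le1$, one still gets $\log\card V \le \tfrac{4\log 2}{a}\diam_a(\Gamma,t)+R_0$, and since the statement only needs $\tfrac{3a}{4}+\diam_a(\Gamma,t) \ge \log\card V - R$, the extra $\tfrac{3a}{4}$ term gives plenty of slack.)

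The main obstacle I anticipate is the bookkeeping in turning the metric path realizing the diameter into a combinatorial path and rigorously justifying the ``at least $\delta - O(1)$ sub-arcs joining distinct boundary components, each of length $\ge a/2 - C'$'' claim: one must handle the possibility that $\eta$ wanders within a single copy of $P_a$ or re-enters a copy it has already visited, and must produce the uniform constant $C'$ for the length of a $P_a$-arc between distinct boundary components (which is really the content lurking in the proof of Lemma \ref{lem_diamP}). Everything else — the $2^\delta$ vertex count and the final inequality chasing — is routine.
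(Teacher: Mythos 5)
Your proposal has a fatal gap at the central estimate. You claim that ``an arc inside $P_a$ joining two distinct boundary components has length at least $a/2-C'$ for a uniform $C'$,'' and you attribute this to the hexagon formulas behind Lemma~\ref{lem_diamP}. This is false: the distance between two \emph{distinct} boundary components of $P_a$ is exactly the seam length $s(a)$, which satisfies $s(a)\sim 2e^{-a/4}\to 0$ as $a\to\infty$. (Lemma~\ref{lem_diamP} controls the \emph{diameter}, i.e.\ the worst pair of points, not the closest pair of boundary components.) So a path in $S_a(\Gamma,t)$ can hop from one pair of pants to an adjacent one at a cost of only about $s(a)$, not $a/2$. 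Consequently your bound on the combinatorial distance, $\delta\lesssim \tfrac{4}{a}\diam_a(\Gamma,t)+c_0$, does not follow: the true combinatorial diameter of $\Gamma$ could be as large as $\diam_a(\Gamma,t)/s(a)$, and then the $3\cdot 2^\delta$ vertex count gives a useless bound on $\log\card{V}$. One could try to rescue your approach by arguing that the admissible twist of $a/4$ or $3a/4$ forces consecutive seam-crossings to be separated by a long detour inside the intermediate pants, but that is a genuinely new argument that you have not made, and it would need to quantify the interaction between the twist and the hexagon geometry.

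The paper's proof takes a different route that sidesteps the short-seam problem altogether. Rather than converting the metric path to a combinatorial one, it places a disjoint disk of some fixed area $A$ inside each copy of $P_a$ near a chosen boundary midpoint (a fact attributed to Parlier), observes that all $\card{V}$ such disks lie inside a metric ball of radius $\diam_a(\Gamma,t)+\tfrac{3a}{4}+C'$ centered at a fixed point of $\partial v$, and then invokes the exponential area growth $\mathrm{area}(D_r)\le\cosh r$ in $\mathbb{H}^2$ to get $\cosh(\diam_a(\Gamma,t)+\tfrac{3a}{4}+C')\ge\card{V}\,A$. This area argument is immune to the fact that adjacent pants are metrically close, because it only needs each pants to be ``fat'' (contain a disk of uniform area), not to be metrically far from its neighbors. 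Your trivalence-plus-$2^\delta$ counting idea is a natural instinct, but here the volume argument is what actually works.
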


\begin{proof} Take a point $x\in\partial v$ for some $v\in V$ and consider a different vertex $w\in V$. By the definition of $\diam_a(\Gamma,t)$, there exist points $p_v\in\partial v$ and $p_w\in\partial w$ so that
\[\dist_{S_a(\Gamma,t)}(p_v,p_w) \leq \diam_a(\Gamma,t). \]
From \eqref{eq_defa0} we get that
\[ \dist_{S_a(\Gamma,t)}(p_v,x) \leq \diam(P_a) \leq \frac{a}{2}+C, \]
for some constant $C>0$ independent of $a$. Furthermore there exists a midpoint $y\in\partial w$ (one of the black points in Figure \ref{pic_pants}) so that
\[ \dist_{S_a(\Gamma,t)}(p_w,y) \leq \frac{a}{4}.\]
There exists constants $A,D>0$ independent of $a$ so that $P_a$ contains a disk of area $A$ at distance $D$ from any midpoint on $\partial P_a$. These constants have been made explicit by Parlier in \cite{Par1}. For the current computation we will stick to $D$ and $A$.

Combining all the observations above, we obtain that in the disk $D_r(x)\subset S_a(\Gamma,t)$ of radius
\[r = \diam_a(\Gamma,t) + \frac{3a}{4} + C' \]
contains at least $\card{V}$ disjoint disks of area $A$. Here $C'$ is slightly larger than $D+C$ so as to include the entire disk in every pair of pants. As such
\[\cosh\left(\diam_a(\Gamma,t) + \frac{3a}{4} + C'\right) \geq \mathrm{area}(D_r(x))\geq \card{V} A. \]
Applying $\cosh^{-1}$ to both sides now gives the inequality.
\end{proof}

Finally, we need the following proposition:
\begin{prp}\label{prp_existence} Let $a\geq a_0$ and 
\[1<r<3/2-8s(a)/a.\]
Then there exists a pair $(\Gamma,t)$ consisting of a trivalent graph $\Gamma=(V,E)$ and $t\in \{-1,1\}^E$ so that every simple closed geodesic $\gamma$ on $S_a(\Gamma,t)$ that is not homotopic to a pants curve satisfies
\[\ell(\gamma) \geq  r\cdot a.\]
\end{prp}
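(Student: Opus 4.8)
I would build $(\Gamma,t)$ by an Erd\H{o}s--Sachs-style greedy/extremal procedure, using the surfaces $X_k$ built out of the balls $B_k$ in the $3$-regular tree as ``local models''. The point of Corollary \ref{cor_nonpantscurves} is that on any tree-of-pants $X_k$, every simple closed geodesic that is not a pants curve already has length at least $3a/2-8s(a)>ra$. So the only way a short non-pants geodesic can appear on a closed surface $S_a(\Gamma,t)$ is if it ``uses the global topology'', i.e.\ if it passes through a pair of pants whose distance-$k$ neighborhood in $\Gamma$ is \emph{not} a tree $B_k$ for a suitable $k$. Concretely: choose $k=k(a,r)$ large enough that a simple closed geodesic of length $<ra$ which is locally modelled on $B_k$ must stay inside the embedded copy of $X_k$ (so Corollary \ref{cor_nonpantscurves} applies); this is possible because such a geodesic meets at most $\lceil ra/(\text{something like }3a/2-8s(a))\rceil$ — in any case a bounded number depending only on $r$ — of the pants curves, hence travels through a bounded number of pairs of pants, so for $k$ large it cannot exit a radius-$k$ tree-neighborhood. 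Then it suffices to produce $(\Gamma,t)$ that is trivalent, \emph{connected} (so $S_a(\Gamma,t)$ is a closed surface), and has the property that around every vertex the ball of radius $k$ in $\Gamma$ is isomorphic to $B_k$ — i.e.\ $\Gamma$ has girth $>2k$ — together with an \emph{admissible} choice of twists $t$ so that every embedded tree-neighborhood is isometric to the corresponding $X_k$.

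The graph-theoretic input is standard: trivalent graphs of arbitrarily large girth exist (e.g.\ by the Erd\H{o}s--Sachs argument, cited in the introduction, or by random regular graphs, or by explicit Ramanujan graphs), so we may fix any connected $3$-regular $\Gamma$ with girth $>2k$. The twists: recall that ``admissible'' means each twist is $a/4$ or $3a/4$, i.e.\ exactly the choices encoded by $t\in\{-1,1\}^E$; so \emph{any} $t$ gives an admissible gluing, and the whole point of Section \ref{sec_pairofpants} (white points glued to black points) is that admissibility of the gluing is a purely local condition that is automatically satisfied. Hence for \emph{any} $t\in\{-1,1\}^E$, the radius-$k$ neighborhood of each vertex of $S_a(\Gamma,t)$ is an admissible tree-of-pants on $B_k$, and — since Corollary \ref{cor_nonpantscurves} (via the uniqueness remark ``this uniquely defines the surface $X_k$'') applies to \emph{every} admissible gluing according to $B_k$ — that neighborhood is isometric to $X_k$. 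So I expect $t$ to be irrelevant and one can take, say, $t\equiv 1$; the statement is phrased with a general $t$ only because the final construction in the next sections will need to adjust the genus, and that adjustment will live in the choice of $\Gamma$ and $t$.

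Assembling: given the short non-pants geodesic $\gamma$ on $S_a(\Gamma,t)$ with $\ell(\gamma)<ra$, it crosses at most $N(r)$ pants curves and so lies in a subsurface spanned by at most $N(r)$ pairs of pants; pick any pair of pants $P$ it meets, look at the ball of radius $k$ around the corresponding vertex $v$; since $\gamma$ has bounded ``combinatorial length'' it stays in the corresponding embedded $X_k\subset S_a(\Gamma,t)$ provided $k$ is chosen larger than $N(r)$; but then $\gamma$ is a simple closed geodesic on $X_k$ not homotopic to a pants curve (it is non-pants on the closed surface, and the inclusion $X_k\hookrightarrow S_a(\Gamma,t)$ is $\pi_1$-injective on the relevant curves since $\Gamma$ has no short cycles), so Corollary \ref{cor_nonpantscurves} forces $\ell(\gamma)\geq 3a/2-8s(a)>ra$, a contradiction.

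**Main obstacle.** The delicate point is making the ``$\gamma$ cannot escape the embedded $X_k$'' step rigorous: one must bound the number of pairs of pants $\gamma$ meets in terms of $r$ alone (not in terms of the closed surface), which needs a lower bound for the length of a sub-arc of $\gamma$ crossing one pair of pants — here one has to be a little careful, since an individual crossing arc can be very short when it runs near a seam, so the clean ``each crossing costs $\geq 3a/2-8s(a)$'' statement from the corollary is only available after the pairing-up of arcs in its proof; the honest version is that consecutive crossings, via the admissibility relation $x_{i+1}=a/4-y_i$, contribute in amortized fashion, and one must extract from this a bound of the form ``$\gamma$ meets at most $C(r)$ pants curves'' with $C(r)$ independent of $a$ (for $a\ge a_0$) and of $\Gamma$. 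Once that counting bound is in hand, choosing $k>C(r)$ and a girth-$>2k$ trivalent $\Gamma$ finishes the proof; everything else is bookkeeping about admissible gluings and the $\pi_1$-injectivity of tree-of-pants subsurfaces.
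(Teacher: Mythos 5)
Your overall plan coincides with the paper's: pick a trivalent graph $\Gamma$ of girth high enough that every short simple closed geodesic on $S_a(\Gamma,t)$ is confined to an embedded tree-of-pants neighborhood, then invoke Corollary~\ref{cor_nonpantscurves}. Your observation that the choice of $t$ is irrelevant to this proposition is also right, and matches the paper. However, the step you flag as the ``main obstacle'' is in fact the easiest part of the proof, and the extra machinery you propose for it (amortization via $x_{i+1}=a/4-y_i$, and insisting on a bound $C(r)$ independent of $a$) is unnecessary.

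The point is that the bound on the number of pairs of pants met by $\gamma$ only needs to be independent of $\Gamma$, not of $a$: since $\Gamma$ is chosen \emph{after} $a$ is fixed, a bound of the form $C(a,r)$ is entirely acceptable, and the girth of $\Gamma$ is then taken larger than $2C(a,r)$. Such a bound is immediate. Each arc of $\gamma\cap P$ in a pair of pants $P$ is a geodesic segment that, since $\gamma$ is a simple closed geodesic, cannot be homotoped into $\partial P$ rel endpoints: if it joins two distinct boundary components its length is at least $s(a)$ (the minimal distance between boundary components); if it returns to the same boundary component, Lemma~\ref{lem_shortcuts} gives length at least $a/2\ge s(a)$. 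Either way each crossing costs at least $s(a)$, so a geodesic of length $<ra$ meets fewer than $\lceil ra/s(a)\rceil$ pairs of pants. Since these pants span a connected subgraph of $\Gamma$ containing some $v_0$, they lie in the radius-$\lceil ra/s(a)\rceil$ ball around $v_0$, and choosing the girth of $\Gamma$ at least $2\lceil ra/s(a)\rceil$ makes that ball a tree, so Corollary~\ref{cor_nonpantscurves} applies. The quantity $ra/s(a)$ blows up as $a\to\infty$ (because $s(a)\to0$), so the required girth depends on $a$, but this is harmless: the proposition is a for-each-$a$ existence statement and Erd\H{o}s--Sachs supplies a fresh $\Gamma$ for each required girth. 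In short, your sketch is sound; you simply did not need the amortized crossing count you anticipated, and the $a$-uniformity you were aiming for is a strengthening the proof never requires.
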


\begin{proof} There exist trivalent graphs of arbitrarily high girth (see for instance \cite{ES}). So we can pick a trivalent graph $\Gamma$ so that its girth $h(\Gamma)$ satisfies
\[h(\Gamma) \geq 2\cdot \lceil r\cdot a/s(a)\rceil. \]
Choose any $t\in \{-1,1\}^E$ and consider a simple closed geodesic $\gamma$ on $S_a(\Gamma,t)$ of length $\ell(\gamma)<r\cdot a$. Given a pair of pants $v\in V$ so that $\gamma\cap v\neq\emptyset$, we have
\[\ell(\gamma\cap v) \geq s(a). \]
This is because $\gamma\cap v$ is not homotpic to any segment in $\partial v$ relative to $\partial v$.

Now pick any pair of pants $v_0\in V$ such that $v_0\cap \gamma \neq \emptyset$. The above tells us that the pairs of pants $v\in V$ that intersect $\gamma$ non-trivially form a subgraph of $D_{\lceil r\cdot a/s(a) \rceil} (v_0)$, the graph ball of radius $\lceil r\cdot a/s(a) \rceil$ around $v_0$. Because of our assumption on the girth of $\Gamma$, we have 
\[D_{\lceil r\cdot a/s(a) \rceil} (v_0) \simeq B_{\lceil r\cdot a/s(a) \rceil},\]
as graphs. Corollary \ref{cor_nonpantscurves} now tells us that $\gamma$ is necessarily a pants curve.
\end{proof}

For $a>0$, define
\[ r(a) = \frac{2\cdot \diam(P_a)}{a}.  \]
Recall that by the definition of $a_0$ \eqref{eq_defa0} we have
\begin{equation}\label{eq_r}
1\leq r(a) \leq \min\left\{\frac{3}{2}-8s(a)/a,1+C/a\right\},
\end{equation}
for all $a\geq a_0$ and some constant $C>0$ independent of $a$.

Given $a\geq a_0$, let us write
\[\mathcal{G}_a = \st{(\Gamma,t)}{\substack{\displaystyle{\text{Any simple closed geodesic  }\gamma\text{ on }S_a(\Gamma,t)\text{ that is}} \\ \displaystyle{\text{not a pants curve satisfies } \ell(\gamma) \geq r(a)\cdot a}}}\]
The proposition above tells us that this set is not empty. Note that
\[\sys_a(\Gamma,t) = a \]
for any $(\Gamma,t)\in\mathcal{G}_a$. This is because the systole of a closed hyperbolic surface is necessarily a simple closed geodesic.

The fact that $\mathcal{G}_a$ is non empty makes it possible to define $\Gamma_a,t_a$ to be any pair so that
\[g(\Gamma_a,t_a) = \min_{(\Gamma,t)\in\mathcal{G}_a}\{g(\Gamma,t)\} \]

We claim that this pair defines a surface with the desired properties:
\begin{thm}\label{thm_pantssystoles} There exists a constant $K>0$, independent of $a$ so that
\[\sys_a(\Gamma_a,t_a)\geq \frac{4}{7}\log(g(\Gamma_a,t_a)) - K, \]
for all $a\geq a_0$. In particular, $S_a(\Gamma_a,t_a)$ is a surface with large systoles and a systolic pants decomposition.
\end{thm}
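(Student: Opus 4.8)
The plan is to bound the genus of $S_a(\Gamma_a,t_a)$ from above in terms of $a$, and then invert this relation. The key point is that $g(\Gamma_a,t_a)$ is defined as a \emph{minimum} over $\mathcal{G}_a$, so it suffices to exhibit \emph{one} pair $(\Gamma,t)\in\mathcal{G}_a$ whose genus is not too large; minimality then does the rest. Since the genus of $S_a(\Gamma,t)$ is essentially $\card{V}/2$ (for a trivalent graph $\Gamma$ with $\card{V}$ vertices one has $\card{E}=3\card{V}/2$, and the Euler characteristic count gives $g = \card{V}/2 + 1$), controlling the genus is the same as controlling the number of vertices of $\Gamma$.

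The main input is the interplay between Proposition \ref{prp_existence}, Lemma \ref{lem_diama}, and the choice $r=r(a)=2\diam(P_a)/a$. By \eqref{eq_r} we have $1\le r(a)\le 3/2-8s(a)/a$ for $a\ge a_0$, so Proposition \ref{prp_existence} applies with this $r$ and produces a pair $(\Gamma,t)\in\mathcal{G}_a$. First I would observe that for \emph{any} $(\Gamma,t)\in\mathcal{G}_a$ the quantity $\diam_a(\Gamma,t)$ is automatically bounded above: if two vertices $v,w$ of $\Gamma$ were far apart, a shortest path between $\partial v$ and $\partial w$ together with return arcs inside the pants would let one build a non-pants simple closed geodesic of controlled length — but more directly, one uses that a curve realizing large $\diam_a$ forces a long systole-free configuration, contradicting $\sys_a(\Gamma,t)=a$ combined with the diameter bound on each $P_a$ from Lemma \ref{lem_diamP}. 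Concretely, any geodesic loop through a vertex has length at least $2\diam_a$ minus a bounded error, and by the definition of $\mathcal{G}_a$ any non-pants curve has length $\ge r(a)\cdot a = 2\diam(P_a)$; combining with $\diam(P_a)\le a/2+C$ shows $\diam_a(\Gamma,t)$ cannot exceed roughly $a/2$ up to an additive constant. I then plug this bound into Lemma \ref{lem_diama}:
\[
\log(\card{V}) \le \frac{3a}{4} + \diam_a(\Gamma,t) + R \le \frac{3a}{4} + \frac{a}{2} + C + R = \frac{5a}{4} + (C+R).
\]
Hmm — this gives $\log g \lesssim \tfrac{5a}{4}$, i.e. $a \gtrsim \tfrac{4}{5}\log g$, which is better than the claimed $\tfrac{4}{7}$; the discrepancy means the honest bound on $\diam_a$ is the weaker $\diam_a(\Gamma,t) \le r(a)\cdot a/2 + O(1) = \diam(P_a) + O(1) \le a/2 + O(1)$ only when the extremal loop is a figure-eight through one vertex, whereas a loop through two far pants gives the sharper constraint $2\diam_a \le r(a)\cdot a + O(1)$, hence $\diam_a \le a/2 + O(1)$ still — so in fact one should track the bound as $\diam_a(\Gamma,t)\le \tfrac{r(a)a}{2}+O(1)$ directly from the defining inequality of $\mathcal{G}_a$, giving $\log g \le \tfrac{3a}{4}+\tfrac{r(a)a}{2}+O(1)\le \tfrac{3a}{4}+\tfrac{3a}{4}+O(1)=\tfrac{3a}{2}+O(1)$; hmm that is worse. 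The correct extraction: $\tfrac{7a}{4}$ appears as $\tfrac{3a}{4}+\diam_a$ with $\diam_a \le \tfrac{a}{2}+C$ from Lemma \ref{lem_diamP} applied to the whole surface after bounding $\diam_a(\Gamma,t)$ by the surface diameter, but the subtlety is bounding the surface diameter by $\diam_a + a/2$; in any case the arithmetic must be arranged so that $\tfrac{3a}{4}+\diam_a(\Gamma_a,t_a) \ge \log g - R$ together with $\diam_a(\Gamma_a,t_a)\le a + O(1)$ yields $\tfrac{7a}{4}\ge \log g - O(1)$, i.e. $\sys_a = a \ge \tfrac{4}{7}\log g - \tfrac{4}{7}O(1)$.

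So the core of the argument, and the step I expect to be the main obstacle, is establishing the upper bound $\diam_a(\Gamma_a,t_a) \le a + O(1)$ (equivalently, the right multiple of $a$) for the genus-minimal pair. The idea is an Erd\H{o}s–Sachs-style surgery: if $\diam_a(\Gamma_a,t_a)$ were larger than this, pick two vertices $v,w$ realizing a near-extremal distance but lying in the same ``half'' so that a geodesic path between $\partial v$ and $\partial w$, when closed up along a bounded arc using admissibility of the gluing, produces a non-pants simple closed geodesic; since $(\Gamma_a,t_a)\in\mathcal{G}_a$ this curve has length $\ge r(a)\cdot a$, forcing the distance — and hence $\diam_a$ — to be bounded. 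Alternatively, and I suspect this is the route the author takes, one shows that if $\card{V}$ (equivalently the genus) were too large relative to $e^a$, one could surgically \emph{reduce} $\Gamma_a$ to a smaller trivalent graph still lying in $\mathcal{G}_a$ — cutting along two far-apart pants curves and regluing — contradicting minimality of $g(\Gamma_a,t_a)$; the diameter-versus-systole comparison of Lemma \ref{lem_diama} is exactly the quantitative engine that says ``large genus forces large diameter forces a short non-pants curve unless we can shrink.'' Once $\diam_a(\Gamma_a,t_a)$ is pinned to $a+O(1)$, Lemma \ref{lem_diama} gives $\log(g+O(1)) = \log\card{V} + O(1) \le \tfrac{3a}{4}+\diam_a(\Gamma_a,t_a)+R \le \tfrac{7a}{4}+O(1)$, and rearranging with $\sys_a(\Gamma_a,t_a)=a$ yields $\sys_a(\Gamma_a,t_a)\ge \tfrac{4}{7}\log g(\Gamma_a,t_a) - K$ for a suitable constant $K$. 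The final sentence of the theorem — that this is a surface with a systolic pants decomposition — is then immediate: the pants curves all have length $a=\sys_a(\Gamma_a,t_a)$, and membership in $\mathcal{G}_a$ guarantees every other simple closed geodesic is strictly longer (since $r(a)\ge 1$, and one checks the inequality is strict for $a\ge a_0$ by \eqref{eq_defa0}), so the $a$-curves are precisely the systoles and they form a pants decomposition by construction.
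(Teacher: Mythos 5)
The proposal lands on the same architecture as the paper --- use minimality of $g(\Gamma_a,t_a)$ to run an Erd\H{o}s--Sachs surgery that bounds $\diam_a(\Gamma_a,t_a)$ from above, then convert via Lemma~\ref{lem_diama} --- and you correctly identify that the bound $\diam_a(\Gamma_a,t_a)\leq a + O(1)$ (the paper proves $\diam_a(\Gamma_a,t_a)\leq r(a)\cdot a$, and $r(a)\leq 1+C/a$) is what yields the $4/7$. However the write-up has two genuine problems.

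First, the opening framing --- ``it suffices to exhibit one pair $(\Gamma,t)\in\mathcal{G}_a$ whose genus is not too large'' --- is a dead end if taken literally. The pairs delivered by Proposition~\ref{prp_existence} require girth on the order of $a/s(a)\sim \frac{a}{2}e^{a/4}$, and trivalent graphs of girth $h$ have at least $\sim 2^{h/2}$ vertices, so their genus is doubly exponential in $a$. There is no known direct construction of a member of $\mathcal{G}_a$ with genus $\sim e^{7a/4}$; the whole point is to extract such a bound \emph{indirectly} through minimality and surgery. The middle of your write-up is also confused --- you misapply Lemma~\ref{lem_diamP} (which bounds the diameter of a single $P_a$, not $\diam_a(\Gamma,t)$ of the whole surface) and the inference ``non-pants curves are long, hence $\diam_a\leq a/2$'' is not a valid deduction --- but you notice the arithmetic is off and eventually correct yourself, so this is exposition rather than a fatal error.

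Second, and more substantively, the surgery step is only gestured at, and the hard part is precisely what you skip: after cutting out the two far-apart pants $v,w$ (creating six boundary curves $\alpha_1,\alpha_2,\alpha_3,\beta_1,\beta_2,\beta_3$) and regluing $\alpha_i$ to $\beta_i$, one must verify the new pair $(\Gamma'',t'')$ is still in $\mathcal{G}_a$. That verification has three ingredients the proposal omits. (i) Any arc running from an $\alpha$-curve to a $\beta$-curve has length at least $\dist_{S_a(\Gamma_a,t_a)}(\partial v,\partial w)\geq r(a)\cdot a$, directly from the assumed diameter violation. (ii) Any simple arc $\gamma$ in $S_a(\Gamma',t')$ with both endpoints on (say) $\partial v$, not homotopic into $\partial v$, can be closed up in $S_a(\Gamma_a,t_a)$ by an arc inside $v$ of length at most $\diam(P_a)$; this produces a non-pants curve on $S_a(\Gamma_a,t_a)$, so $\ell(\gamma)\geq r(a)\cdot a - \diam(P_a) = r(a)\cdot a/2$ --- and this identity is exactly why $r(a)$ was defined as $2\diam(P_a)/a$. (iii) Any non-pants simple closed geodesic on $S_a(\Gamma'',t'')$ that cannot be homotoped into $S_a(\Gamma',t')$ must contain at least two such arcs, giving length $\geq r(a)\cdot a$. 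Without (ii) and (iii) the surgery doesn't close: one could imagine a short new curve threading through the new gluing, and it's these arc estimates that rule it out. Your proposal names the strategy but does not supply the estimates that make it work.
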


\begin{proof} What we will actually prove is that
\[ \diam_a(\Gamma_a,t_a) \leq r(a) \cdot a. \]
Suppose this not the case. This means that we can find vertices $v,w\in V$ so that 
\[ \dist_{S_a(\Gamma_a,t_a)}(\partial v, \partial w) \geq r(a)\cdot a. \]
If we remove the corresponding pairs of pants, we obtain a pair $(\Gamma',t')$ where $\Gamma'=(V',E')$ is a graph with vertex degrees bounded by $3$ and $t'\in\{-1,1\}^{E'}$. This pair still corresponds to a surface $S_a(\Gamma',t')$, depicted in Figure \ref{pic_modifygraph} below:
\begin{figure}[H]
\begin{center}
\begin{overpic}[scale=1]{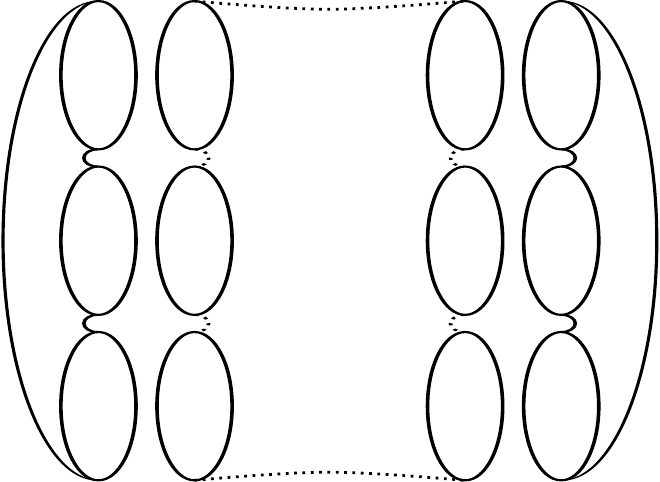} 
\put (3,36) {$v$}
\put (27,61) {$\alpha_1$}
\put (27,36) {$\alpha_2$}
\put (27,11) {$\alpha_3$}
\put (93,36) {$w$}
\put (68,61) {$\beta_1$}
\put (68,36) {$\beta_2$}
\put (68,11) {$\beta_3$}
\put (40,36) {$S_a(\Gamma',t')$}
\end{overpic}
\end{center}
\caption{Modifying the surface.}
\label{pic_modifygraph}
\end{figure}

We claim that when we glue the curve $\alpha_i$ to $\beta_i$ for $i=1,2,3$ in an admissible way, then the resulting surface still has no simple non-pants curves of length $\leq r(a)\cdot a$. Let us denote the corresponding graph and set of twists by $\Gamma''$ and $t''$ respectively.

First of all note that the surface $S_a(\Gamma',t')$ certainly has no such curves. So any simple non-pants curves of length $\leq r(a)\cdot a$ needs to consist of arcs between various $\alpha$- and $\beta$-curves.

The fact that $\dist_{S_a(\Gamma_a,t_a)}(\partial v,\partial w) \geq r(a)\cdot a$ tells us that any arc that runs from an $\alpha$-curve to a $\beta$-curve is too long. So if $S_a(\Gamma'',t'')$ has a non-pants curve of length $\leq r(a)\cdot a$, it consists of arcs running between $\alpha$-curves and arcs running between $\beta$-curves. 

Furthermore, such a curve necessarily contains at least two such arcs. Finally, we may assume that if any of these arcs has both endpoints on the same $\alpha$- or $\beta$-curve, it is not homotopic to a segment of the corresponding $\alpha$- or $\beta$- curve relative to its endpoints. Otherwise, we would be able to homotope the curve into $S_a(\Gamma',t')$, which is a case that we have already dealt with.

Consider a simple arc $\gamma$ running between $p,q\in\partial v$. On $S_a(\Gamma_a,t_a)$ we can use a simple arc between $p$ and $q$ in $v$ to complete $\gamma$ to a simple curve that is not homotopic to a pants curve. Because $S_a(\Gamma_a,t_a)$ has no non-pants curves of length $\leq r(a)\cdot a$ we have
\[\ell(\gamma) \geq r(a)\cdot a-\diam(P_a) = r(a)\cdot a/2. \]
But then any curve that contains more than two such arcs has length at least $r(a)\cdot a$. As such $S_a(\Gamma'',t'')$ indeed has no non-pants curves of length $\leq r(a)\cdot a$.

This means that $(\Gamma'',t'')\in\mathcal{G}_a$. But
\[ g(\Gamma'',t'') < g(\Gamma_a,t_a), \]
which contradicts our assumption on $(\Gamma_a,t_a)$. 

So indeed 
\[\diam_a(\Gamma_a,t_a) \leq r(a) \cdot a. \]
Filling in Lemma \ref{lem_diama} and using \eqref{eq_r} gives
\[\frac{7a}{4}+C\geq \left(\frac{3}{4}+r(a)\right)\cdot a \geq \log(\card{V}) = \log(2g(\Gamma_a,t_a)-2), \]
for some $C>0$ independent of $a$. Using that $a=\sys_a(\Gamma_a,t_a)$ and rearranging the terms now gives the result.
\end{proof}

Note that we have built these surfaces with admissible twists only. This is however not strictly necessary. The proof of Theorem \ref{thm_pantssystoles} nowhere uses admissibility of the twists. Admissible twists were in fact only convenient in the construction of a surface in which all non-pants curves are sufficiently long. As such, we could let go of the assumption and enlarge the set of surfaces $\mathcal{G}_a$.

\nocite{*}
\bibliographystyle{alpha}
\bibliography{systoles.bib}

\end{document}